\newcommand{\Int}{\operatorname{{\mathrm int}}}
\newcommand{\In}{\operatorname{{\mathrm In}}}
\newcommand{\e}      {{\varepsilon}}
\newcommand{\R}      {{\mathbb{R}}}
\renewcommand{\o}      {{\omega}}
\newcommand{\T}      {{\mathsf{T}}}
\renewcommand{\d}      {{\delta}}
\newcommand*\diff{\mathop{}\!\mathrm{d}}
\newcommand{\be}{\begin{equation}}
\newcommand{\ee}{\end{equation}}
\newtheorem{theorem}{Theorem} 
\newtheorem{Theorem}[theorem]{Theorem}
\newtheorem{Assumption}[theorem]{Assumption}
\newtheorem{Remark}[theorem]{Remark}
\newtheorem{Corollary}[theorem]{Corollary}
\newtheorem{Example}[theorem]{Example}
\newcommand{\sbm}[1]{\left[\begin{smallmatrix} #1
\end{smallmatrix}\right]}
\newcommand{\bbm}[1]{\left[\begin{matrix} #1 \end{matrix}\right]}
\title{\LARGE \bf
On singularly perturbed systems that are monotone with respect to a matrix cone of rank~$k$\thanks{This work was partially supported by a research grant from  the ISF.}}
\author{Ron Ofir,  Pietro Lorenzetti, 
and Michael Margaliot
\thanks{RO and PL contributed equally to this work. 
PL and MM   are with the School of Electrical Engineering, 
Tel Aviv University, Ramat Aviv, Israel  69978. RO is with
	the Andrew and Erna Viterbi Faculty of Electrical and Computer Engineering,
	Technion--Israel Institute of Technology, Haifa 3200003, Israel.
  Correspondence  {\tt\small  michaelm@tauex.tau.ac.il}}%
}
\begin{document}

\maketitle
\thispagestyle{empty}
\pagestyle{empty}

\begin{abstract}
We derive  a sufficient condition guaranteeing that a singularly perturbed linear time-varying 
 system is strongly monotone with respect to a matrix cone $C$ of rank~$k$.  
 This implies that the singularly perturbed system inherits the asymptotic properties of systems 
 that are strongly monotone with respect to $C$, which include convergence to the set of 
 equilibria when~$k=1$, and a Poincar\'e-Bendixson property when~$k=2$. We extend this result to
 singularly perturbed nonlinear systems with  a compact and convex state-space. 
 We  
 demonstrate our 
 theoretical results 
 using   a simple
 numerical example.
\end{abstract}
\section{Introduction}\label{sec1}

Dynamical systems whose flow  preserves the ordering induced
by a proper\footnote{A cone is called  proper if it is closed, convex,  and pointed.}   cone~$K$ are called monotone systems. Such systems play an important role in systems and control theory. We begin by reviewing the  classic example that is based on the proper cone~$K=\R^n_+:=\{x\in\R^n : x_i\geq 0,\; i=1,\dots,n\}$, that is, the non-negative  orthant in~$\R^n$. This  cone induces a (partial) ordering~$\leq$ between vectors~$a,b\in\R^n$ defined  by
\[
a\leq b \iff 
b-a\in K
\iff 
a_i \leq b_i ,\; i=1,\dots,n.
\]
Systems whose flow preserves this ordering are called 
positive systems~\cite{farina} 
or cooperative systems~\cite{hlsmith}
and have a rich theory. In particular,    the seminal work of Hirsch~\cite{H84,H88,H82,H85}  (see also the work of Matano in~\cite{matano1979,matano1986}) has led to  
what is now known as  Hirsch's   generic convergence theorem, asserting
that the generic precompact orbit  of a strongly monotone  system approaches the set of equilibria (see Pol\'{a}\v{c}ik \cite{P89,P90} and Smith-Thieme \cite{ST91} for an  improved version of this result).  
  Angeli and Sontag~\cite{monotne_control_systems} have  extended the notion of a monotone system to a system with inputs and outputs, and showed how monotonicity can be applied to analyze  the   feedback connection of such systems. 

  A generalization of monotone systems   are 
  systems whose flow preserves the ordering induced by a cone of rank~$k$ (abbreviated $k$-cone). 
  A set~$C\subset \mathbb{R}^{n}$ is called a $k$-cone if:
\begin{enumerate}
    \item $C$ is closed,
    \item $x\in C \implies \alpha x \in C $ for all $\alpha \in \R$,
    \item $C$ contains a linear $k$-dimensional subspace, and no linear subspace  of  a higher dimension.
\end{enumerate}
The notion of a $k$-cone was introduced by Fusco and Oliva~\cite{FO88,FO91}
in the finite-dimensional case, and by Krasnosel\'skii et al.~\cite{KLS89} in Banach spaces. 
To avoid degenerate cases, we will assume also that~$ C $ has a non-empty  interior.

For example, if~$P\in\R^{n\times n}$ is symmetric with~$k$ negative eigenvalues 
 and~$n-k$ positive eigenvalues then the set
\be\label{eq:quadcone}
C_P^-:= \{x\in\R^n : x^\T Px \leq 0 \}
\ee
is a~$k$-cone. 
 Since~$C_P^-$ is defined by the matrix~$P$, we refer to it as a \emph{matrix $k$-cone}.  
Similarly, 
\[  
C_P^+:= \{x\in\R^n : x^\T Px \geq 0 \}
\]
is a matrix~$(n-k)$-cone. 

In general,  a  $k$-cone is quite different from a proper cone, as it includes a linear subspace  and  it is typically not convex.
Given a~$k$-cone $C$, it is possible to define the  relations:
\[
a \leq ^{\scriptstyle C} b \iff b-a\in C,
\]
and
\[
a \ll ^{\scriptstyle C}   b \iff b-a\in\Int(C),
\]
but these are not (partial) orderings. In fact, they are    neither antisymmetric nor transitive. 
A dynamical system is called monotone with respect to~(w.r.t.)
$C$ if  its flow~$\Phi$ satisfies
\[
a \leq ^{\scriptstyle C}  b \implies \Phi_t(a) \leq ^{\scriptstyle C} \Phi_t(b) \text{ for all } t\geq 0,
\]
and strongly monotone w.r.t. $C$ if, in addition, 
\[
a \leq ^{\scriptstyle C}  b , \;a\not = b\implies \Phi_t(a) \ll^{\scriptstyle C} \Phi_t(b) \text{ for all } t> 0.
\]

For example, consider the LTI system~$\dot x=Ax$, with~$A\in\R^{n\times n}$,  and the matrix $k$-cone  in~\eqref{eq:quadcone}. Fix~$a,b\in\R^n$ such that~$a\leq^{\scriptstyle C_P^-} b$, that is, $(b-a)^\T P(b-a)\leq 0$. 
Then  
\begin{align*}
   & (\Phi_t(b)-\Phi_t(a))^\T P
    (\Phi_t(b)-\Phi_t(a))
   \\& = (\exp(At)(b-a ) ) ^\T P  (\exp(At)(b-a) )
   \\&= 
     (b-a)^\T\left (P+( PA+A^\T P) t +o(t) \right )(b-a)  ,
\end{align*}
implying that a sufficient condition for the  flow to be  strongly monotone w.r.t.  
$C_P^-$ is that  
\be\label{eq:smith_cond}
  PA+A^\T P \preceq -2 s P - \sigma  I_n, \text{ for some } s\geq 0 , \sigma >0.
\ee
This linear matrix inequality~(LMI) resembles the Lyapunov equation, but here $P$ is not necessarily positive-definite. 

Moreover, Eq.~\eqref{eq:smith_cond} implies that 
the function~$V:\R^n\to \R$ defined by~$V(z):= z ^\T P z$ satisfies  
\[
\dot V(x(t)) \leq - 2s V(x(t))-\sigma x^\T(t)x(t) 
\]
  along  a solution of~$\dot x=Ax $.

An important observation is that if~$K$ is 
a proper   cone 
then~$C:=K\cup(-K)$ is a
$1$-cone~\cite{FO91}. Since cooperative  systems admit~$\R^n_+ \cup (-\R^n_+)$ as an invariant set, 
the class of 
systems that are monotone  w.r.t. a $k$-cone 
naturally includes the classical cooperative (and, more generally, monotone) dynamical systems.
However, as noted above there is an essential difference between a $k$-cone, with~$k\geq 2$, and a proper cone, and this requires substantially new ideas for exploring the
implications of  monotonicity  w.r.t. a $k$-cone.

As noted by Sanchez~\cite{Sanchez2009}, there are many important 
examples of flows that are monotone with respect to a~$k$-cone, with~$k\geq 2$, 
including: high-dimensional competitive systems (see, e.g., \cite{B13,B19,H88n,JMW09,M20,M94,MNR19,JN17,WJ01}); systems with quadratic cones like in~\eqref{eq:quadcone}   
and associated Lyapunov-like functions (see, for example, \cite{OS00,R80,R87,2cone_in_SEIRS}); and monotone cyclic feedback systems with negative feedback~\cite{MN13,MS96b,MH90,CTPDS}, arising from a wide range of neural and physiological control systems. A recent example of an important system 
that is monotone with respect to a 2-cone is the antithetic integral feedback system~\cite{margaliot2019compact}. 

To the best of our knowledge,
condition~\eqref{eq:smith_cond} appeared for the first time in the work of  R. A. Smith~\cite{smith_1979,smith1980}. He showed that  strong monotonicity w.r.t. to a matrix 2-cone   implies  a Poincar\'e-Bendixson property: the $\omega$-limit set of a bounded orbit containing no equilibria is a closed orbit. 
Such a result can be useful in establishing that a dynamical system admits a closed orbit, which is in general not trivial.

Consider a system that is  
  strongly monotone with respect to a (not necessarily matrix) $k$-cone   $C$, with~$k\geq2$.
A nontrivial orbit~$ O(a) :=\{\Phi_t(a): t\geq 0\}$ is called \emph{pseudo-ordered} if  
 there exist times~$\tau_1 \not= \tau_2$ such that~$\Phi_{\tau_1}(a)   \leq ^{\scriptstyle C}  \Phi_{\tau_2}(a)$. Otherwise, it is called \emph{unordered}.
The structure of an $\omega$-limit set of a pseudo-ordered  orbit
is much more complicated than in the case of a system that is monotone w.r.t. a proper cone, due to the lack of convexity of~$C$. Sanchez~\cite{Sanchez2009,sanchez2}   showed that the closure of any orbit in the $\omega$-limit set of a pseudo-ordered orbit is ordered with respect to~$C$, and  proved  a Poincar\'{e}-Bendixson property
  for pseudo-ordered orbits when~$k=2$.  
Sanchez considered  smooth finite dimensional flows, and used  the $C^1$-Closing Lemma to prove his results.
Feng, Wang and Wu~\cite{feng2017} considered the more general case of  a semiflow on a Banach space,  without the smoothness requirement, and  proved  that the $\omega$-limit set $\omega(x)$  of a pseudo-ordered semiorbit admits a trichotomy: $\omega(x)$ is either ordered; or $\omega(x)$ is contained in the set of equilibria; or $\omega(x)$ possesses a certain ordered homoclinic property.  

 Recently, Feng,   Wang, and  Wu~\cite{FWW19} 
 proved that a system that is
 strongly monotone 
 w.r.t. a~$k$-cone
 admits an open and dense subset~$D$ of the phase space
 such that 
 orbits with initial point in~$D$ 
 are either pseudo-ordered or convergent to equilibria. This covers the celebrated Hirsch’s Generic Convergence Theorem for 
 $k=1$, and yields a generic Poincar\'{e}-Bendixson property for $k=2$. Their  approach uses  
 ergodic arguments based on  $k$-exponential separation.

Wang,  Yao and  Zhang~\cite{prevalent_wang}
also analyzed systems that are strongly monotone w.r.t. a $k$-cone from a 
measure-theoretic perspective.
They showed that prevalent (or,  equivalently, almost all) orbits will be pseudo-ordered or convergent to equilibria. This reduces to Hirsch’s prevalent convergence Theorem for~$k=1$; and implies an almost-sure Poincar\'{e}–Bendixson Theorem in  the case $k=2$.  

 Weiss and Margaliot~\cite{Eyal_k_posi}  introduced   $k$-cooperative  systems, i.e.,   ODE systems that are strongly monotone with respect to the $k$-cone of vectors in~$\R^n$ with up to~$k-1$ sign variations.  They showed that 
  $1$-cooperative systems are just cooperative systems, 
  cyclic feedback   systems with negative feedback are~$2$-cooperative systems, 
	and~competitive systems are (up to a coordinate transformation)~$(n-1)$-cooperative systems. 
  They also    established  a 
strong Poincar\'e-Bendixson property 
for strongly $2$-cooperative nonlinear systems, namely, if the $\o$-limit set of a  precompact solution
  does not include an equilibrium point then it is a closed orbit.
As noted above,  Sanchez~\cite{Sanchez2009} proved  a Poincar\'e-Bendixson property for  \emph{certain} trajectories
of a system that is strongly monotone with respect to a $2$-cone. The results for~2-cooperative systems
are stronger: they  apply to every precompact trajectory.
See~\cite{margaliot2019compact} for an
  application of these results to a   closed-loop model  
from systems biology.

Forni, Sepulchre and their colleagues~\cite{Forni2018,Miranda-Villatoro2018}  
considered   monotone systems w.r.t. a matrix $k$-cone
(see~\eqref{eq:quadcone}) in a systems and control perspective, calling them~$k$-dominant systems, and used~\eqref{eq:smith_cond} as the definition of $k$-dominance with rate~$s$. Importantly, using the fact that~\eqref{eq:smith_cond} is
 an LMI, they extended the notion of a system that is monotone w.r.t. a matrix $k$-cone to systems with inputs and outputs, and studied the interconnections of such systems.  

Singularly perturbed systems \cite{Kokotovic1999}, \cite[Ch.~11]{Khalil2002} 
appear naturally in many control applications. For instance, see~\cite{LorenzettiTAC,LorenzettiAutomatica} for an application to low-gain integral control, and~\cite{Lorenzetti2022} for an application to power systems stability.

Wang and Sontag~\cite{wang_sontag_sing}
showed that  a singularly perturbed strongly monotone
system inherits the generic convergence properties of strongly monotone
systems, and demonstrated an application in systems biology.  Niu and Xie~\cite{sing_2cone} showed that singularly perturbed systems that are strongly monotone w.r.t. a 2-cone   inherit the 
generic Poincar\'{e}-Bendixon property. The results 
in~\cite{wang_sontag_sing,sing_2cone} are based on geometric singular perturbations theory~\cite{fenichel_geo,Sakamoto1990}. 

Here, we   consider $k$-dominance for singularly perturbed systems. We derive  a new sufficient condition  for  a singularly perturbed system to be  $k$-dominant. Our proof uses standard tools from linear algebra and dynamical systems theory, and does not require the heavy machinery of geometric singular perturbations theory. Another advantage of our approach is that it  provides an \emph{explicit} matrix cone~$C_\mathcal{P}^-$ such that the perturbed system is strongly monotone w.r.t. this cone for all~$\e>0$ sufficiently small. 

The paper is organized as follows.  
Section~\ref{sec:pre}
describes some preliminary results that are used later on. Section~\ref{sec:main}
describes our main results, and the final section concludes.

 We use standard notation. 
Vectors [Matrices] are denoted by 
small [capital] letters. 
Denote $\R_+ : =\{x\in\R \ | \ x\geq0\}$. 
A matrix~$A\in\R^{n\times n}$ is called positive-definite (denoted~$A\succ 0)$ if~$A$ is symmetric and~$x^\T Ax>0$ for all~$x\in\R^n\setminus\{0\}$. 
The inertia of a symmetric matrix~$A\in\R^{n\times n}$
 is the triple
\[
\In(A)  : = (\nu(A) ,  \zeta(A), \pi (A) ) ,
\]
in which $\nu (A)$, $\zeta(A)$ and $\pi(A)$ stand for the number of negative, zero,
and positive eigenvalues of~$A$, respectively. For example,~$\In(A)=(0,0,n)$ 
[$\In(A)=(n,0,0)$]
iff~$A$ is positive-definite [negative-definite]. 

\section{Preliminaries}\label{sec:pre}
For the reader's
convenience, we recall 
some background on singular perturbation theory  from \cite[Chap.~5]{Kokotovic1999}.

\subsection{Singular Perturbation Theory for LTV Systems}\label{subsec:SP}

Consider the LTV singularly perturbed system
\begin{align}\label{eq:LTV_SP}
 \dot x(t)&=A(t)x(t)+B(t)z(t),\nonumber\\
  \e \dot{z}(t)&=C(t)x(t)+D(t)z(t) , 
\end{align}
where $x:\R_+ \to \R^{n_r}, z:\R_+ \to \R^{n_f}$, and $\e>0$ small.

\begin{Assumption}\label{ass:1}
There exists $c_1>0$ such that 
$$\mathrm{Re} ( \lambda(D(t)))\leq -c_1<0 \text{ for all }  t\geq0.$$
\end{Assumption}

\begin{Assumption}\label{ass:2}
The matrices $A(t),B(t),C(t),D(t)$ are continuously differentiable,
bounded, and with bounded derivatives for all $t\geq0$.
\end{Assumption}

\begin{Theorem}
\cite[Theorem~3.1 (Ch.~5)]{Kokotovic1999}
\label{thm:Kok_LH}
Assume that the matrices $A,B,C,D$ satisfy Assumptions~\ref{ass:1} and~\ref{ass:2}.
Then, there exists an $\e^*>0$ such that for all $\e\in(0,\e^*)$ there exist continuously
differentiable matrices $L_\e(t)$ and $H_\e(t)$, bounded for all $t\geq0$, satisfying
\begin{align}\label{eq:Kok_LH}
\e\dot{L}_\e&=DL_\e-C-\e L_\e ( A-BL_\e ) , \\
-\e\dot{H}_\e&=H_\e D-B+\e H_\e L_\e B-\e ( A-BL_\e ) H_\e.
\end{align}
Moreover, 
\begin{align}\label{eq:Llasym}
L_\e=L_0+O(\e),\quad H_\e=H_0+O(\e),
\end{align}
with~$L_0:=D^{-1}C$, $ H_0:=BD^{-1}$.    
\end{Theorem}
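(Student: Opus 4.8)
The plan is to prove the theorem constructively, by solving the two matrix ODEs directly; they arise from the classical two-stage block-triangularizing change of variables $\eta=z+L_\e(t)x$, $\xi=x-\e H_\e(t)\eta$ (the time-varying Chang transformation), which is why their $\e=0$ reductions are purely algebraic. I would solve~\eqref{eq:Kok_LH} for $L_\e$ first, since $L_\e$ appears as a coefficient in the equation for $H_\e$. Setting $\e=0$ in~\eqref{eq:Kok_LH} gives $0=D L_0-C$, which is solvable because Assumption~\ref{ass:1} forces $D(t)$ to be uniformly Hurwitz, hence uniformly invertible, so $L_0:=D^{-1}C$ is well defined and bounded. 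Substituting $L_\e=L_0+M$ into~\eqref{eq:Kok_LH}, and using that $\dot L_0$ is bounded (Assumption~\ref{ass:2}), the terms not involving $M$ cancel and one gets $\e\dot M=D(t)M+\e\,\Psi(t,M)$, with $\Psi(t,M)=-\dot L_0-(L_0+M)\bigl(A-B(L_0+M)\bigr)$ continuous, bounded on bounded $M$-sets, and uniformly Lipschitz in $M$ on bounded $M$-sets, all with bounds independent of $\e$.

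The key analytic ingredient is that, for $\e$ small, the fast homogeneous equation $\e\dot y=D(t)y$ is \emph{uniformly} exponentially stable: its state-transition matrix $\Phi_D(t,s)$ obeys $\|\Phi_D(t,s)\|\le Ke^{-\alpha(t-s)/\e}$ for all $t\ge s\ge0$, with $K,\alpha>0$ independent of $\e$. This is exactly where both assumptions enter together: Assumption~\ref{ass:1} gives frozen-time Hurwitzness, while Assumption~\ref{ass:2} makes $D$ slowly varying in the stretched time $\tau=t/\e$ (its $\tau$-derivative is $O(\e)$). A clean way to see it is to take $P(t):=\int_0^\infty e^{D(t)^\T r}e^{D(t) r}\,\dd r$, which solves $D(t)^\T P(t)+P(t)D(t)=-I$, is bounded, uniformly positive definite, and $C^1$ with bounded derivative, so that $V(t,y):=y^\T P(t)y$ decays along $\e\dot y=D(t)y$ once $\e$ is small enough to dominate the $\dot P$ term. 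Given this bound, variation of constants (integrating forward from $t=0$, since the principal part is stable) turns the $M$-equation into the fixed-point problem $M=\mathcal T M$, where $(\mathcal T M)(t):=\int_0^t\Phi_D(t,s)\,\Psi(s,M(s))\,\dd s$ acts on bounded continuous matrix functions; the exponential bound gives $\|\mathcal T M\|_\infty\le\tfrac{K\e}{\alpha}\sup\|\Psi\|$ and Lipschitz constant $\le\tfrac{K\e}{\alpha}\,\mathrm{Lip}(\Psi)$, so for $\e$ small $\mathcal T$ maps a fixed ball into itself contractively. Its unique fixed point $M_\e$ satisfies $\|M_\e\|_\infty=O(\e)$, i.e.\ $L_\e=L_0+O(\e)$; and $C^1$-regularity together with boundedness of $\dot L_\e$ follow by reading them off~\eqref{eq:Kok_LH}, noting $D L_\e-C=D M_\e=\e\dot M_\e-\e\Psi=O(\e)$, so the $\tfrac1\e$ factor is harmless.

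With $L_\e$ fixed, the second equation becomes a \emph{linear} (affine) matrix ODE in $H_\e$, namely $\e\dot H=-H(D+\e L_\e B)+B+\e(A-BL_\e)H$, whose $\e=0$ reduction $0=-HD+B$ gives $H_0:=BD^{-1}$. Writing $H_\e=H_0+N$ reduces it to $\e\dot N=-N D(t)+\e\,\Xi_0(t)+\e\,\Xi_1(t)N$, with $\Xi_0,\Xi_1$ bounded and continuous (here one uses that $\dot H_0$ is bounded). The principal part $\e\dot N=-ND$ is now \emph{anti}-stable (transposing gives $\e\dot N^\T=-D^\T N^\T$ with $-D^\T$ anti-Hurwitz), so the bounded solution on $[0,\infty)$ is the one selected by integrating \emph{backward} from $+\infty$, using that the (right) transition operator of $\e\dot N=-ND$ decays like $e^{-\alpha(s-t)/\e}$ as $s\to\infty$ for fixed $t$; this again produces a contraction with the same $O(\e)$ estimates, yielding $\|N_\e\|_\infty=O(\e)$ and $H_\e=H_0+O(\e)$, with $C^1$-regularity and boundedness of $\dot H_\e$ read off the ODE as before. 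I expect the main obstacle to be establishing the uniform-in-$\e$ exponential decay of $\Phi_D$ from the frozen-time spectral condition plus slow variation, and, throughout, keeping every constant independent of $\e$ while the $\tfrac1\e$ factors are manipulated; the remainder is a routine perturbation-of-an-algebraic-equation argument on the half-line.
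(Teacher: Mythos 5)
This theorem is quoted from \cite[Theorem~3.1 (Ch.~5)]{Kokotovic1999} and the paper supplies no proof of its own, so the relevant baseline is the cited source; your argument is correct and is essentially that standard proof: establish the uniform bound $\|\Phi_D(t,s)\|\leq K e^{-\alpha(t-s)/\e}$ for the fast transition matrix from frozen-time Hurwitzness (Assumption~\ref{ass:1}) plus slow variation in the stretched time (Assumption~\ref{ass:2}), then run a contraction-mapping argument on the integral equation for $M=L_\e-L_0$ (integrating forward from $t=0$) and for $N=H_\e-H_0$ (integrating backward from $+\infty$, since the principal part of the $H$-equation is anti-stable), which yields boundedness, $C^1$ regularity, and the $O(\e)$ estimates. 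The only cosmetic imprecision is writing the linear part of the $N$-equation as $\Xi_1(t)N$, whereas $N$ is actually multiplied on both sides (the term is $-NL_\e B+(A-BL_\e)N$); since this is still a bounded linear operator in $N$ with $\e$-independent bounds, the contraction estimates are unaffected.
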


Under the assumptions of   Theorem~\ref{thm:Kok_LH}, the 
matrix
\begin{equation}\label{eq:T}
T_\e(t):=\bbm{I_{n_r} & \e H_\e(t) \\ -L_\e(t) & I_{n_f}-\e L_\e(t)H_\e(t)},
\end{equation}
is well-defined and non-singular   for all $t\geq0$.
Introducing the state-variables $\sbm{\xi \\ \eta}:=(T_\e)^{-1}\sbm{x \\ z}$ transforms 
the singularly perturbed LTV system~\eqref{eq:LTV_SP} to  the decoupled system: 
\begin{align}\label{eq:LTV_xieta}
\begin{bmatrix} \dot \xi \\ \dot \eta \end{bmatrix}=\mathcal{A}_\e(t) 
\begin{bmatrix}  \xi \\   \eta \end{bmatrix},
\end{align}
with
\begin{equation}\label{eq:A_e}\mathcal{A}_\e(t) 
:= \bbm{A(t)-B(t)L_\e(t) & 0 \\ 0 & \e^{-1} D(t)+ L_\e(t)B(t)}.
\end{equation}

\begin{Remark}
It can be verified that $\det( T^{-1}_\e(t))=1$
for all~$t\geq0$ and  all~$\e>0$.
Moreover, $T_\e$ is a Lyapunov transformation, namely,
it is stability-preserving (see \cite[Ch.~5]{Kokotovic1999} for more details).
\end{Remark}

\begin{Remark}
Theorem~\ref{thm:Kok_LH} guarantees that the matrices
$L_\e(t)$  [$H_\e(t)$] converge  when~$\e\to0$  to
$L_0$ [$H_0$] (uniformly in~$t\geq0$), i.e., to the
solutions of \eqref{eq:Kok_LH} obtained by
setting~$\e=0$.
\end{Remark}

The reduced (slow) model associated to \eqref{eq:LTV_SP} is
\begin{equation}\label{eq:LTV_red}
\dot{x}_s(t)=A_0(t)x_s(t), \text{ with } A_0(t) : =A(t)-B(t)L_0(t),
\end{equation}
while the boundary-layer (fast) systems is given by
\begin{equation}\label{eq:LTV_fast}
\e\dot{z}_f(t)=D(t)z_f(t),
\end{equation}
where here~$t\geq0$ is treated as a fixed parameter.

Singular perturbation theory allows to deduce the stability of~\eqref{eq:LTV_SP} 
based on properties of the slow and fast systems. The next result demonstrates this. 

\begin{Theorem}\cite[Theorem~4.1 (Ch.~5)]{Kokotovic1999}
\label{thm:Kok_LTV}
Assume that the matrices $A,B,C,D$ in~\eqref{eq:LTV_SP} satisfy Assumptions~\ref{ass:1} and~\ref{ass:2}.
Further, assume that the slow system \eqref{eq:LTV_red} is uniformly asymptotically stable.
Then, there exists an $\e^*>0$ such that for any $\e\in(0,\e^*)$ the singularly 
perturbed system \eqref{eq:LTV_SP} is uniformly asymptotically stable. In particular, the state transition matrix $\phi_{\e}$  of \eqref{eq:LTV_SP}
satisfies 
$$\|\phi_{\e}(t,s)\|\leq m e^{-\alpha(t-s)} \text{ for all }  t\geq s\geq0,$$
for some~$m,\alpha>0$ that are independent of~$\e$.
\end{Theorem}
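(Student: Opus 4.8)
The plan is to exploit the \emph{exact} block-diagonalizing transformation furnished by Theorem~\ref{thm:Kok_LH}. Since~$T_\e$ is a Lyapunov transformation that is bounded together with its inverse, uniformly in~$t\geq0$ and in~$\e\in(0,\e^*)$ (recall~$\det T_\e^{-1}(t)=1$, that~$L_\e,H_\e$ are bounded, and that~$L_\e=L_0+O(\e)$), the state transition matrix of~\eqref{eq:LTV_SP} factors as
\[
\phi_\e(t,s)=T_\e(t)\,\bbm{\phi_\e^{s}(t,s) & 0\\[2pt] 0 & \phi_\e^{f}(t,s)}\,T_\e^{-1}(s),
\]
where~$\phi_\e^{s}$ and~$\phi_\e^{f}$ are the state transition matrices of the decoupled slow and fast subsystems in~\eqref{eq:LTV_xieta}, i.e. of~$\dot\xi=(A-BL_\e)\xi$ and~$\dot\eta=(\e^{-1}D+L_\e B)\eta$ respectively. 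Hence it suffices to establish a uniform (in~$t$ and in~$\e$) exponential bound for each of the two subsystems separately; the claimed~$m,\alpha$ then follow by taking the worse of the two rates and absorbing the uniform bounds on~$T_\e$ and~$T_\e^{-1}$. Since for linear time-varying systems uniform asymptotic stability is equivalent to uniform exponential stability, such bounds also yield the stated stability conclusion.

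For the slow subsystem, write~$A-BL_\e=A_0+B(L_0-L_\e)=A_0+O(\e)$, uniformly in~$t$, by~\eqref{eq:Llasym} and the boundedness of~$B$ from Assumption~\ref{ass:2}. The reduced model~$\dot x_s=A_0 x_s$ is uniformly asymptotically, hence uniformly exponentially, stable by hypothesis, and~$A_0$ is bounded; therefore a converse Lyapunov theorem for LTV systems provides a~$C^1$ matrix~$P(t)$ with~$c_2 I\preceq P(t)\preceq c_3 I$ and~$\dot P(t)+A_0^\T(t) P(t)+P(t) A_0(t)\preceq -I$ for all~$t\geq0$. Differentiating~$V(t,\xi)=\xi^\T P(t)\xi$ along the perturbed slow subsystem adds a term bounded by~$O(\e)|\xi|^2$, so for~$\e$ small enough~$\dot V\leq -\tfrac12|\xi|^2\leq -\tfrac{1}{2c_3}V$, which gives uniform exponential stability of~$\phi_\e^{s}$ with rate and constant independent of~$\e$.

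For the fast subsystem, Assumption~\ref{ass:1} together with Assumption~\ref{ass:2} guarantees that the frozen-time Lyapunov equation~$D^\T(t)Q(t)+Q(t)D(t)=-I$ has the unique solution~$Q(t)=\int_0^\infty e^{D^\T(t)\tau}e^{D(t)\tau}\,\diff\tau$, which is~$C^1$, bounded, bounded below by a positive multiple of~$I$, and with bounded derivative, all uniformly in~$t$. Differentiating~$W(t,\eta)=\eta^\T Q(t)\eta$ along~$\dot\eta=(\e^{-1}D+L_\e B)\eta$ gives
\[
\dot W=\e^{-1}\eta^\T(D^\T Q+QD)\eta+\eta^\T\dot Q\,\eta+\eta^\T(B^\T L_\e^\T Q+QL_\e B)\eta\leq\bigl(-\e^{-1}+c_4\bigr)|\eta|^2,
\]
with~$c_4$ independent of~$\e$; thus for~$\e<1/(2c_4)$ we obtain~$\dot W\leq-\tfrac{1}{2\e}|\eta|^2\leq-\tfrac{1}{2\e\,\overline q}\,W$, where~$\overline q$ bounds~$Q(t)$ from above. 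This yields a uniform exponential bound for~$\phi_\e^{f}$ whose decay rate is at least a fixed positive constant (indeed it blows up as~$\e\to0$). Shrinking~$\e^*$ to satisfy all the smallness requirements above then completes the argument.

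The main obstacle is the bookkeeping of uniformity: every constant --- the bounds on~$P(t)$ and on~$Q(t)$, the~$O(\e)$ estimate on~$L_\e-L_0$, the perturbation bound~$c_4$ --- must be shown to be independent of both~$t$ and~$\e$. This relies crucially on the uniform-in-$t$ statements of Theorem~\ref{thm:Kok_LH} and of Assumption~\ref{ass:2}, on the~$t$-regularity and uniform bounds of the frozen Lyapunov solution~$Q(t)$, and on the equivalence of uniform asymptotic and uniform exponential stability for linear time-varying systems, which is precisely what legitimizes invoking a converse Lyapunov function with~$t$-uniform bounds for the slow reduced model.
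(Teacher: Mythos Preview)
Your proposal is correct and follows precisely the approach the paper itself sketches: transform~\eqref{eq:LTV_SP} via~$T_\e$ into the decoupled system~\eqref{eq:LTV_xieta}, treat the slow block as a regular perturbation of the uniformly asymptotically stable reduced model~\eqref{eq:LTV_red}, handle the fast block via Assumption~\ref{ass:1}, and pull the resulting exponential bounds back through the uniformly bounded Lyapunov transformation~$T_\e$. The paper does not give a detailed proof but refers to~\cite[Sec.~5.2]{Kokotovic1999}; your Lyapunov-based treatment of the two blocks is exactly the standard execution of that outline.
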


The proof of  this theorem is based on using   $T_\e$ in \eqref{eq:T}
to transform \eqref{eq:LTV_SP} into \eqref{eq:LTV_xieta}, and  observing that $\xi$ is
a regular perturbation of the uniformly asymptotically stable (slow) system \eqref{eq:LTV_red}
(see Theorem~\ref{thm:Kok_LH}). Similarly, the stability of~$\eta$ is linked to that
of the fast variable $z_f$  in \eqref{eq:LTV_fast}. For a detailed proof, we refer the reader
to \cite[Sec.~5.2]{Kokotovic1999}.

\section{Main results}\label{sec:main}
This section includes our main results. 

\subsection{$p$-Dominance of a Singularly Perturbed LTV System}

Our first result provides  a sufficient condition for $p$-dominance of~\eqref{eq:LTV_SP}.
\begin{Theorem}\label{thm:LTV}
Consider the singularly perturbed LTV system~\eqref{eq:LTV_SP},  
with~$A,B,C,D$ satisfying Assumption~\ref{ass:2}.
 Suppose that there exist symmetric matrices~$P_r\in\R^{n_r\times n_r}$
 with $\In(P_r)=(p,0,n_r-p)$, and $P_f\in\R^{n_f\times n_f}$ with
$\In(P_f)=(0,0,n_f)$, rates $\lambda_r,\lambda_f \geq 0$ and~$\sigma_r,\sigma_f>0 $ such that
\begin{subequations}\label{cond:reduced_dominant}
\begin{equation}\label{eq:p-dom_LTV}
P_r A_0 (t) + A_0^\T(t)  P_r \preceq -2 \lambda_r P_r-\sigma_r I_{n_r}, 
\end{equation}
\begin{equation}\label{cond:fast_dominant_LTV}
P_f D(t) + D^\T(t) P_f \preceq -2 \lambda_f P_f - \sigma_f I_{n_f},
\end{equation}
\end{subequations}
for all $t \ge 0$.
Let
$
\mathcal{P} := \begin{bmatrix}
P_r & 0 \\
0 & P_f
\end{bmatrix}$.
Then  there exists $\e^* > 0$ such that for any~$\e \in (0,\e^*)$, system~\eqref{eq:LTV_SP} is $p$-dominant w.r.t.~$\mathcal{P}$ with rate $\lambda_r$.
\end{Theorem}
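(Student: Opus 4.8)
The plan is to reduce the singularly perturbed system to the decoupled form of Section~\ref{sec:pre} and then verify the defining $p$-dominance LMI block by block. First I would record that, since inertia is additive over block-diagonal matrices, $\mathcal{P}$ is symmetric with $\In(\mathcal{P}) = (p,0,n_r+n_f-p)$, so that $C_\mathcal{P}^-$ really is a matrix $p$-cone in $\R^{n_r+n_f}$. Next I would invoke Theorem~\ref{thm:Kok_LH}: for $\e$ below some $\e_1>0$ the matrix $T_\e(t)$ in~\eqref{eq:T} is a well-defined, bounded, continuously differentiable, non-singular Lyapunov transformation with $\det T_\e^{-1}\equiv1$, and the change of variables $\sbm{\xi\\\eta}=T_\e^{-1}\sbm{x\\z}$ turns~\eqref{eq:LTV_SP} into the decoupled system~\eqref{eq:LTV_xieta} with the block-diagonal matrix $\mathcal{A}_\e(t)$ of~\eqref{eq:A_e}. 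It therefore suffices to exhibit $\sigma>0$ (independent of $t$) with
\[
\mathcal{P}\,\mathcal{A}_\e(t)+\mathcal{A}_\e^\T(t)\,\mathcal{P}\preceq-2\lambda_r\mathcal{P}-\sigma I_{n_r+n_f},\qquad t\ge0,
\]
for all sufficiently small $\e>0$; and since $\mathcal{P}$ and $\mathcal{A}_\e(t)$ share the block partition, this single inequality decomposes into one inequality for the slow block $A(t)-B(t)L_\e(t)$ paired with $P_r$, and one for the fast block $\e^{-1}D(t)+L_\e(t)B(t)$ paired with $P_f$.

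For the slow block I would use $L_\e=L_0+O(\e)$ from~\eqref{eq:Llasym} together with the uniform boundedness of $A,B$ (Assumption~\ref{ass:2}) to write $A(t)-B(t)L_\e(t)=A_0(t)+E_\e(t)$ with $\sup_{t\ge0}\|E_\e(t)\|=O(\e)$. Then
\[
P_r\bigl(A-BL_\e\bigr)+\bigl(A-BL_\e\bigr)^\T P_r=\bigl(P_rA_0+A_0^\T P_r\bigr)+\bigl(P_rE_\e+E_\e^\T P_r\bigr),
\]
and by~\eqref{eq:p-dom_LTV} the first term is $\preceq-2\lambda_r P_r-\sigma_r I_{n_r}$ while the second is bounded in norm by $O(\e)$; hence the slow-block inequality holds with rate $\lambda_r$ and residual, say, $\sigma_r/2$, once $\e$ is small enough.

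For the fast block I would write $P_f(\e^{-1}D+L_\e B)+(\e^{-1}D+L_\e B)^\T P_f=\e^{-1}\bigl(P_fD+D^\T P_f\bigr)+R_\e(t)$, where $R_\e(t):=P_fL_\e(t)B(t)+B^\T(t)L_\e^\T(t)P_f$ is uniformly bounded, $\|R_\e(t)\|\le\rho$, by the boundedness of $L_\e$ (Theorem~\ref{thm:Kok_LH}) and of $B$ (Assumption~\ref{ass:2}). Using~\eqref{cond:fast_dominant_LTV}, $\lambda_f\ge0$, and $P_f\succ0$ gives $\e^{-1}(P_fD+D^\T P_f)\preceq-\e^{-1}\sigma_f I_{n_f}$, and $P_f\preceq\|P_f\|I_{n_f}$ gives $-2\lambda_r P_f\succeq-2\lambda_r\|P_f\|I_{n_f}$. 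Thus the fast-block inequality holds with rate $\lambda_r$ and some residual $\sigma_0>0$ as soon as $\e^{-1}\sigma_f\ge\rho+2\lambda_r\|P_f\|+\sigma_0$. Finally I would take $\e^*$ to be the smallest of $\e_1$ and the thresholds from the two block estimates, set $\sigma:=\min\{\sigma_r/2,\sigma_0\}$, and conclude.

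The step I expect to require the most care is the fast block, for two reasons. First, the rate demanded of the full system is $\lambda_r$, which bears no a priori relation to $\lambda_f$, and $\lambda_f$ may even be $0$, so one cannot lean on the term $-2\e^{-1}\lambda_f P_f$; the argument must instead exploit that the stiff term $\e^{-1}(P_fD+D^\T P_f)$ contributes $-\e^{-1}\sigma_f I_{n_f}$, which tends to $-\infty$ and therefore overwhelms both the bounded cross term $R_\e$ and the fixed target $-2\lambda_r P_f$. Second, every "$O(\e)$" and boundedness estimate above must be uniform in $t\ge0$; this uniformity is exactly what Assumption~\ref{ass:2} and Theorem~\ref{thm:Kok_LH} supply, but it should be invoked explicitly so that a single $\e^*$ and a single $\sigma$ serve for all $t$.
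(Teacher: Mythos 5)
Your proposal is correct and follows essentially the same route as the paper's proof: invoke Theorem~\ref{thm:Kok_LH} to pass to the decoupled system~\eqref{eq:LTV_xieta}, then verify the dominance LMI blockwise, treating $A-BL_\e$ as an $O(\e)$ perturbation of $A_0$ absorbed by $\sigma_r$, and letting the stiff term $\e^{-1}(P_fD+D^\T P_f)\preceq-\e^{-1}\sigma_f I_{n_f}$ dominate the bounded cross term and the $-2\lambda_r P_f$ target for small $\e$. Your version merely spells out the uniform-in-$t$ estimates and the choice of $\sigma$ more explicitly than the paper does.
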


Note that
\eqref{eq:p-dom_LTV}
 is a $p$-dominance condition for the reduced model~\eqref{eq:LTV_red}.
Since~$P_f$ is positive-definite, \eqref{cond:fast_dominant_LTV} is a contraction condition on the fast system~\eqref{eq:LTV_fast}
w.r.t. to a scaled~$L_2$ norm (see, e.g.,~\cite{LOHMILLER1998683,sontag_cotraction_tutorial}). This latter condition
is a ``strong version'' of Assumption~\ref{ass:1}.

\begin{proof}
 Eq.~\eqref{cond:fast_dominant_LTV}  implies that Assumption~\ref{ass:1} holds, so
 by 
     Theorem~\ref{thm:Kok_LH}  it is enough to analyze~\eqref{eq:LTV_xieta}. 
     Let~$\sigma :=(1/2)\min\{ \sigma_r,\sigma_f\}>0$.
     We  claim that there exists~$\e^*>0$ 
     such that for any~$\e\in(0,\e^*)$, we have 
     \begin{align*}
         \mathcal{P}\mathcal{A}_\e(t)+\mathcal{A}^\T_\e(t)\mathcal{P} \preceq -2\lambda_m \mathcal{P} -  \sigma  I_{n_r+n_f}, 
     \end{align*}
     for all~$t\geq0$.  To simplify the notation, we omit the dependence on~$t$ from now on. By the block-diagonal form of~$\mathcal{A}_\e$, it is enough to show that for any~$t\geq 0$, we have
     \begin{align}\label{eq:fwn1}
            P_r (A -B L_{\e})+(A -B L_{\e})^\T  P_r \preceq -2\lambda_r P_r- \sigma  I_{n_f},  
     \end{align}
     and
     \begin{multline}\label{eq:fwn2}
            P_f (\e^{-1} D+L_{\e}B )+(\e^{-1} D+L_{\e}B )^\T  P_f \\ \preceq -2\lambda_r P_f-  \sigma  I_{n_f},  
     \end{multline}
Combining~\eqref{eq:p-dom_LTV}, Assumption~\ref{ass:2}
and~\eqref{eq:Llasym}
implies that~\eqref{eq:fwn1} holds. Furthermore, since $L_\e$ and $B$ are bounded, it follows from~\eqref{cond:fast_dominant_LTV} that the left hand side of~\eqref{eq:fwn2} can be made arbitrarily negative-definite  by taking $\e>0$ small enough, and in particular such that~\eqref{eq:fwn2} holds, and this completes the proof.
\end{proof}

\vspace{2mm}

Specializing   Theorem~\ref{thm:LTV} to the case of  LTI systems yields the following result.

\begin{Corollary}\label{coro:lti_case}
Consider the LTI singularly perturbed system:
\begin{align}\label{eq:LTI_SP}
 \dot x(t)&=A x(t)+B z(t),\nonumber\\
  \e \dot{z}(t)&=C x(t)+D z(t) , 
\end{align}
with~$D $   invertible, and let~$  A_0 : =  A-BD^{-1}C $. 
Suppose that there exist symmetric matrices~$P_r, P_f\in\R^{n\times n}$ 
with  $\In(P_r)=(p,0,n_r-p)$,
$\In(P_f)=(0,0,n_f)$,   rates~$\lambda_r,\lambda_f\geq 0$, and~$\sigma_r,\sigma_f>0$  such that
\begin{subequations} 
\begin{equation} 
P_r  A_0   +   A_0^\T   P_r \preceq -2 \lambda_r P_r -\sigma_r I_{n_r}, 
\end{equation}
\begin{equation} 
P_f D  + D^\T  P_f \preceq -2 \lambda_f P_f-\sigma_f I_{n_f}. 
\end{equation}
\end{subequations}
Then  there exists $\e^* > 0$ such that for any~$\e \in (0,\e^*)$, system~\eqref{eq:LTI_SP} is $p$-dominant with respect to~$\mathcal{P}$ with rate~$\lambda_r$. 
\end{Corollary}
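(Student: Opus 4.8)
The plan is to obtain Corollary~\ref{coro:lti_case} as an immediate specialization of Theorem~\ref{thm:LTV} to the case of constant system matrices. First I would observe that the LTI system~\eqref{eq:LTI_SP} is exactly the LTV system~\eqref{eq:LTV_SP} with the constant choices $A(t)\equiv A$, $B(t)\equiv B$, $C(t)\equiv C$, $D(t)\equiv D$. Constant matrices are continuously differentiable, bounded, and have bounded (in fact vanishing) derivatives, so Assumption~\ref{ass:2} holds automatically and needs no separate verification.

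Next I would check that the remaining hypotheses of Theorem~\ref{thm:LTV} are satisfied by the data $P_r$, $P_f$, $\lambda_r$, $\lambda_f$, $\sigma_r$, $\sigma_f$ given in the corollary. The two LMIs~\eqref{eq:p-dom_LTV} and~\eqref{cond:fast_dominant_LTV} are required to hold for all $t\ge 0$; since their left- and right-hand sides are time-independent here, they reduce precisely to the two LMIs assumed in the corollary. The only point that deserves a word is that the reduced matrix must match: with $D$ invertible one has $L_0=D^{-1}C$ from Theorem~\ref{thm:Kok_LH}, so $A_0(t)=A(t)-B(t)L_0(t)=A-BD^{-1}C$, which is exactly the $A_0$ defined in the corollary. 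One may also note that the fast LMI together with $P_f\succ 0$ gives $P_f D+D^\T P_f\preceq -\sigma_f I_{n_f}\prec 0$, so by the Lyapunov inequality $D$ is Hurwitz and in particular invertible; thus this standing assumption is actually automatic, though harmless to impose.

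Finally I would apply Theorem~\ref{thm:LTV} directly: it yields an $\e^*>0$ such that for every $\e\in(0,\e^*)$ the system is $p$-dominant with respect to $\mathcal{P}=\sbm{P_r & 0 \\ 0 & P_f}$ with rate $\lambda_r$, which is the assertion of the corollary.

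Since this is a pure specialization, I do not expect any genuine obstacle. The only step requiring a little care is confirming that the general construction's reduced matrix $A-BL_0$ with $L_0=D^{-1}C$ coincides with the $A_0$ of the corollary, and that all the regularity demanded by Assumption~\ref{ass:2} is trivially inherited from the LTV setting.
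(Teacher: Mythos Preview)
Your proposal is correct and mirrors the paper's own treatment: the corollary is obtained simply by specializing Theorem~\ref{thm:LTV} to constant matrices, with no additional argument. Your observations that Assumption~\ref{ass:2} is trivially satisfied and that $A_0=A-BD^{-1}C$ coincides with $A-BL_0$ are exactly the checks needed, and your side remark that the fast LMI already forces $D$ to be Hurwitz (hence invertible) is a nice bonus.
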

In the particular case~$p=0$ (so~$P_r\succ 0$), this implies that for any~$\e>0$ sufficiently small the perturbed system is~$0$-dominant  w.r.t. to the positive-definite matrix~$\mathcal{P}$, and thus it is asymptotically stable. This recovers Theorem~\ref{thm:Kok_LTV}. 

\subsection{$p$-Dominance in a Singularly Perturbed Nonlinear System}

We now extend   the analysis to time-invariant nonlinear systems.  The basic idea is to consider the variational equation
associated with the singularly perturbed nonlinear system (this is sometimes referred to as \emph{differential analysis} or \emph{incremental
analysis}  
\cite{Forni2018,sontag_cotraction_tutorial,kordercont}). In our case, the variational system is a singularly perturbed  LTV system. 

Consider the singularly perturbed nonlinear system
\begin{align} \label{eq:cl_original}
\dot{x}&=f(x,z), \nonumber \\
\e\dot{z}&=g(x,z),
\end{align}
where $\e>0$, $f\in C^2(\mathbb{R}^{n_r}\times
\mathbb{R}^{n_f};\mathbb{R}^{n_r})$, $g\in C^2(\mathbb{R}^{n_r}\times\mathbb{R}^{n_f};\mathbb{R}^{n_f})$, and~$f(
0,0)=0$, $g(0,0)=0$. 

We assume that~\eqref{eq:cl_original} admits a
compact and convex state-space~$\Omega_x\times \Omega_z \subset \R^{n_r}\times\R^{n_f}$ for any~$\e>0$.
Given an initial condition~$\begin{bmatrix}x_0 \\ z_0\end{bmatrix}\in\Omega_x \times \Omega_z$, 
the    variational system of~\eqref{eq:cl_original} is  
\begin{equation}\label{eq:cl_incremental}
\begin{gathered}
\dot{x}_\d=\frac{\partial f}{\partial x}(x,z)x_\d+\frac{\partial f}{\partial z}(x,z)z_\d, \\
\e\dot{z}_\d=\frac{\partial g}{\partial x}(x,z)x_\d+\frac{\partial g}{\partial z}(x,z)z_\d,
\end{gathered}
\end{equation}
where the Jacobians are evaluated  along the trajectory~$\sbm{x(t) \\ z(t)} \in\Omega_x \times \Omega_z$
of \eqref{eq:cl_original}. Note that~\eqref{eq:cl_incremental} is a singularly perturbed LTV system. 

We assume that there exists~$h\in C^2(\mathbb{R}^{n_r};\mathbb{R}^{n_f})$
such that~$g(x,h(x))=0$ for all $x\in\mathbb{R}^{n_r}$.
Then, the reduced model associated with \eqref{eq:cl_incremental} is  
\begin{equation}\label{eq:red_incremental}
\dot{x}_s= \left[\frac{\partial f}{\partial x}(x,z)+\frac{\partial f}{\partial z}(x,z)\frac{\partial h}{\partial x}(x)\right]x_s,
\end{equation}
where 
\begin{equation}\label{eq:h_d}
\frac{\partial h}{\partial x}(x)=-\left[\frac{\partial g}{\partial z}(x,h(x))\right]^{-1}\frac{\partial g}{\partial x}(x,h(x)).
\end{equation}
The boundary-layer associated to \eqref{eq:cl_incremental} is  
\begin{equation}\label{eq:fast_incremental}
\e \dot{z}_f=\frac{\partial g}{\partial z}(x,z_f+h(x))z_f,
\end{equation}
where here~$x\in\mathbb{R}^{n_r}$ is treated as a fixed parameter. Let
\begin{equation}\label{eq:NLmatrices}
\begin{gathered}
A:=\frac{\partial f}{\partial x}(x,z), \quad B:=\frac{\partial f}{\partial z}(x,z), \\ 
C:=\frac{\partial g}{\partial x}(x,z), \quad D:=\frac{\partial g}{\partial z}(x,z),
\end{gathered}
\end{equation}
where for simplicity  we  omit
the dependence of $A,B,C,D$ on $(x(t),z(t))$. Let
\begin{equation}\label{eq:NLL0}
\begin{gathered}
L_0:=D^{-1}C \quad \text{and} \quad A_0:=A-BL_0.
\end{gathered}
\end{equation}

\begin{Assumption}\label{ass:NL}
The matrices $A,B,C,D$
are continuously differentiable, bounded, and with bounded derivatives for any initial condition
$(x_0,z_0)\in\Omega_x\times\Omega_z$ of~\eqref{eq:cl_original}.
\end{Assumption}

We can now state our second main result.
\begin{Theorem}\label{thm:NL}
Consider the singularly perturbed nonlinear system~\eqref{eq:cl_original}, and assume that 
  Assumption~\ref{ass:NL} holds. Suppose that there exist symmetric matrices~$P_r\in\R^{n_r\times n_r}$
 with $\In(P_r)=(p,0,n_r-p)$, and $P_f\in\R^{n_f\times n_f}$ with
$\In(P_f)=(0,0,n_f)$, rates $\lambda_r,\lambda_f \geq 0$ and~$\sigma_r,\sigma_f>0 $ such that
\begin{subequations}\label{cond:nonlin_reduced_dominant}
\begin{equation}\label{eq:p-dom_vari}
P_r A_0 + (A_0)^\T P_r \preceq -2 \lambda_r P_r-\sigma_r I_{n_r}, 
\end{equation}
\begin{equation}\label{cond:fast_dominant_vari}
P_f D + D^\T P_f \preceq -2 \lambda_f P_f - \sigma_f I_{n_f},
\end{equation}
\end{subequations}
for all $(x,z)$ emanating from initial conditions $(x_0,z_0)\in\Omega_x\times\Omega_z$ of \eqref{eq:cl_original}.
Let
$$
\mathcal{P} := \begin{bmatrix}
P_r & 0 \\
0 & P_f
\end{bmatrix}.$$
Then  there exists $\e^* > 0$ such that for any~$\e \in (0,\e^*)$, the system~\eqref{eq:cl_original} is $p$-dominant w.r.t.~$\mathcal{P}$ with rate $\lambda_r$.
\end{Theorem}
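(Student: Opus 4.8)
The plan is to reduce the nonlinear statement to the LTV statement already proven in Theorem~\ref{thm:LTV}, applied to the variational system~\eqref{eq:cl_incremental}. First I would fix an arbitrary initial condition~$\sbm{x_0 \\ z_0}\in\Omega_x\times\Omega_z$ and consider the corresponding trajectory~$\sbm{x(t) \\ z(t)}$ of~\eqref{eq:cl_original}, which remains in the compact convex set~$\Omega_x\times\Omega_z$ for all~$t\ge 0$. Along this trajectory the matrices~$A,B,C,D$ defined in~\eqref{eq:NLmatrices} are continuously differentiable functions of~$t$, and by Assumption~\ref{ass:NL} they are bounded with bounded derivatives; hence~\eqref{eq:cl_incremental} is precisely a singularly perturbed LTV system of the form~\eqref{eq:LTV_SP} satisfying Assumption~\ref{ass:2}. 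The reduced model~\eqref{eq:red_incremental} and boundary-layer~\eqref{eq:fast_incremental} coincide with~\eqref{eq:LTV_red} and~\eqref{eq:LTV_fast} for these coefficient matrices, using~$L_0 = D^{-1}C$ from~\eqref{eq:NLL0} and the identity~\eqref{eq:h_d} which shows~$\partial h/\partial x = -D^{-1}C = -L_0$ evaluated on the slow manifold. The hypotheses~\eqref{eq:p-dom_vari} and~\eqref{cond:fast_dominant_vari} are then exactly the conditions~\eqref{eq:p-dom_LTV} and~\eqref{cond:fast_dominant_LTV} of Theorem~\ref{thm:LTV} for this LTV system, holding uniformly along the trajectory.

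The subtlety is uniformity of~$\e^*$: Theorem~\ref{thm:LTV} produces, for each fixed LTV system, an~$\e^*>0$ below which~$p$-dominance holds, but a priori this threshold could depend on the particular trajectory (equivalently, on the initial condition). I would address this by tracking the proof of Theorem~\ref{thm:LTV}: the threshold~$\e^*$ there depends only on~$\sigma:=\tfrac12\min\{\sigma_r,\sigma_f\}$, on the rate~$\lambda_r$, on the uniform bounds for~$A,B,C,D$ and their derivatives, and on~$c_1$ from Assumption~\ref{ass:1}, all of which—by Assumption~\ref{ass:NL} and the fact that~$\Omega_x\times\Omega_z$ is compact—can be chosen uniformly over all initial conditions in~$\Omega_x\times\Omega_z$. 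Indeed, $c_1$ can be taken uniform from~\eqref{cond:fast_dominant_vari} (as in the observation after Theorem~\ref{thm:LTV} that this condition is a strong version of Assumption~\ref{ass:1}), and the estimates in Theorem~\ref{thm:Kok_LH} giving~$L_\e = L_0 + O(\e)$, $H_\e = H_0 + O(\e)$ are uniform in the same data. Thus a single~$\e^*>0$ works for every trajectory.

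Once~$\e^*$ is fixed uniformly, the conclusion follows: for every~$\e\in(0,\e^*)$ and every initial condition in~$\Omega_x\times\Omega_z$, the variational system~\eqref{eq:cl_incremental} is~$p$-dominant w.r.t.~$\mathcal{P}$ with rate~$\lambda_r$ in the sense of Theorem~\ref{thm:LTV}, i.e.~its flow is strongly monotone w.r.t.~the matrix cone~$C_{\mathcal{P}}^-$. Since this holds along every trajectory of~\eqref{eq:cl_original} and the state-space~$\Omega_x\times\Omega_z$ is compact and convex, the standard differential (incremental) argument—integrating the variational inequality along the segment joining any two points of~$\Omega_x\times\Omega_z$, exactly as in the passage from~$k$-dominance of the linearization to strong monotonicity of a nonlinear flow on a convex domain—yields that the nonlinear flow of~\eqref{eq:cl_original} is strongly monotone w.r.t.~$C_{\mathcal{P}}^-$, that is, $p$-dominant w.r.t.~$\mathcal{P}$ with rate~$\lambda_r$.

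I expect the main obstacle to be the uniformity argument in the second paragraph: one must verify carefully that none of the constants entering~$\e^*$ in the proof of Theorem~\ref{thm:LTV} (and in Theorem~\ref{thm:Kok_LH}) secretly depend on the trajectory in a way not controlled by the compactness of~$\Omega_x\times\Omega_z$ and Assumption~\ref{ass:NL}. The convexity and compactness of the state-space are what make the final differential-analysis step legitimate, so those hypotheses are essential and should be invoked explicitly.
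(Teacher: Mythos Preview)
Your proposal is correct and follows essentially the same approach as the paper: reduce to Theorem~\ref{thm:LTV}, use compactness of~$\Omega_x\times\Omega_z$ to obtain a uniform~$\e^*$, and pass from the linear result to the nonlinear flow by the segment/incremental argument on the convex state-space. The paper carries out your ``standard differential argument'' explicitly by writing the difference of two solutions as the singularly perturbed LTV system~\eqref{eq:vwys} with the integrated Jacobians $\int_0^1 \partial f/\partial x(\bar x,\bar z)\,dr$, etc., and then applying Theorem~\ref{thm:LTV} directly to that system rather than to the variational system along a single trajectory.
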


 \begin{proof}
Fix~$(x_0,z_0),(\tilde{x}_0,\tilde{z}_0) \in\Omega_x\times \Omega_z$, and let~$(x(t),z(t)),(\tilde x(t),\tilde z(t))$ denote the corresponding solutions of~\eqref{eq:cl_original}. 
Let~$v(t):=x(t)-\tilde x(t)$, $w(t):=z(t)-\tilde z(t)$, $\bar x(t,r):=rx(t)+(1-r)\tilde x(t)$,
and~$\bar z(t,r):=rz(t)+(1-r)\tilde z(t)$, with~$r\in[0,1]$. Then
\begin{align}\label{eq:vwys}
    \dot v&=\int_0^1 \frac{\partial f}{\partial x}(\bar x,\bar z)\diff r\; v +\int_0^ 1 \frac{\partial f}{\partial z}(\bar x,\bar z)\diff r\;  w,  \\
      \e \dot w&=\int_0^1 \frac{\partial g}{\partial x}(\bar x,\bar z)\diff r\; v +\int_0^ 1 \frac{\partial g}{\partial z}(\bar x,\bar z)\diff r\;  w \nonumber .
\end{align}
By Theorem~\ref{thm:LTV}, conditions~\eqref{cond:nonlin_reduced_dominant}
imply that  
\eqref{eq:vwys}  is~$p$-dominant for all~$\e>0$ sufficiently  small. Using the compactness of~$\Omega_x\times \Omega_z$    implies that there exists~$\e^*>0$ such that~\eqref{eq:vwys} is~$p$-dominant for all~$\e \in (0,\e^*)$ 
and all~$(x_0,z_0)\in \Omega_x\times \Omega_z$.
In other words,  
\[
\begin{bmatrix}
    x_0\\z_0
\end{bmatrix}  - 
\begin{bmatrix}
   \tilde x_0\\\tilde z_0
\end{bmatrix} \in C^-_\mathcal{P}\setminus\{0\} \implies
\begin{bmatrix}
    x(t)\\z(t)
\end{bmatrix}  - 
\begin{bmatrix}
   \tilde x(t)\\\tilde z(t)
\end{bmatrix} \in \Int(C^-_\mathcal{P})
\]
for all~$t >  0$, 
and this completes the proof.
\end{proof}
 
The next example  demonstrates Theorem~\ref{thm:NL}
  using a singularly perturbed version of a mechanical system with a  nonlinear
spring  from~\cite{Forni2018}.

\begin{Example}
Consider the non-linear singularly perturbed system:
\begin{align}\label{eq:non_example}
    \dot x_1& = x_2, \nonumber  \\
    \dot x_2&= v(x_1)-5 z ,\nonumber \\
    \e \dot z & = x_2-z,  
\end{align}
where~$v:\R\to\R$ is~$C^2$, $v(0)=0$, and~$ v' (s) \in [-5 , 2 ]$ for all~$s\in \R$. The associated variational system is the LTV:
\begin{align}
    \dot {\delta x}_1& =\delta  x_2, \nonumber  \\
    \dot {\delta x }_2&= v'(x_1) \delta x_1 -5 \delta z ,\nonumber \\
    \e \dot {  \delta z} & =\delta  x_2- \delta z. 
\end{align}
We can write this as the LTV system \eqref{eq:LTV_SP} with matrices
$$A =\begin{bmatrix} 0 & 1 \\  v'(x_1) &  0 \end{bmatrix}, \ B =\begin{bmatrix} 0 \\ -5\end{bmatrix},
\ C=\begin{bmatrix}  0& 1 \end{bmatrix}, \ D=-1.$$ 
Following Sec.~\ref{sec:pre}, we define
$$L_0:=D^{-1}C= -C, \quad A_0:=A-BL_0=\begin{bmatrix} 0 & 1 \\  v'(x_1) &  -5 \end{bmatrix}.$$ 
Clearly, $A_0$ is always in the convex hull of the matrices
$$ \underline{M} :=\begin{bmatrix}
0&1\\-5 & -5 
\end{bmatrix}, \quad \overline{M}:=\begin{bmatrix}
0&1\\ 2 & -5 
\end{bmatrix}.$$ 
We choose $$P_r:=\begin{bmatrix}
-5.1987 & 3.6260\\ 3.6260 &  6.1987
\end{bmatrix}. $$ Then~$\In(P_r)=(1,0,1)$, and
\begin{align*}
P_r  \underline{M}+ \underline{M}^\T P_r&\preceq -2\lambda_r P_r -\sigma_r I_2 ,  \\
P_r \overline{M}+ \overline{M}^\T P_r &\preceq -2\lambda_r P_r -\sigma_r  I_2, 
\end{align*}
for~$\lambda_r =2$,~$\sigma_r=0.01$, which corresponds to \eqref{eq:p-dom_vari}. Similarly, 
for~$P_f=1$, $\lambda_f=1/2$, and~$\sigma_f = 1$, we have
$$ P_f D +  D^\T P_f \preceq -2 \lambda_f P_f -\sigma_f, $$
which corresponds to \eqref{cond:fast_dominant_vari}. We conclude that all the conditions in Theorem~\ref{thm:NL}  hold, so  the system is~$1$-dominant. Thus, every bounded trajectory converges to an equilibrium point. 
Fig.~\ref{fig:simu} depicts several  trajectories  of~\eqref{eq:non_example} with~$v(x)=7\tanh(x)-5x$ and~$\e=0.01$. It may be seen that indeed  every trajectory converges to an equilibrium point. 
\end{Example}

\begin{figure}
    \centering
    \includegraphics[width=\linewidth]{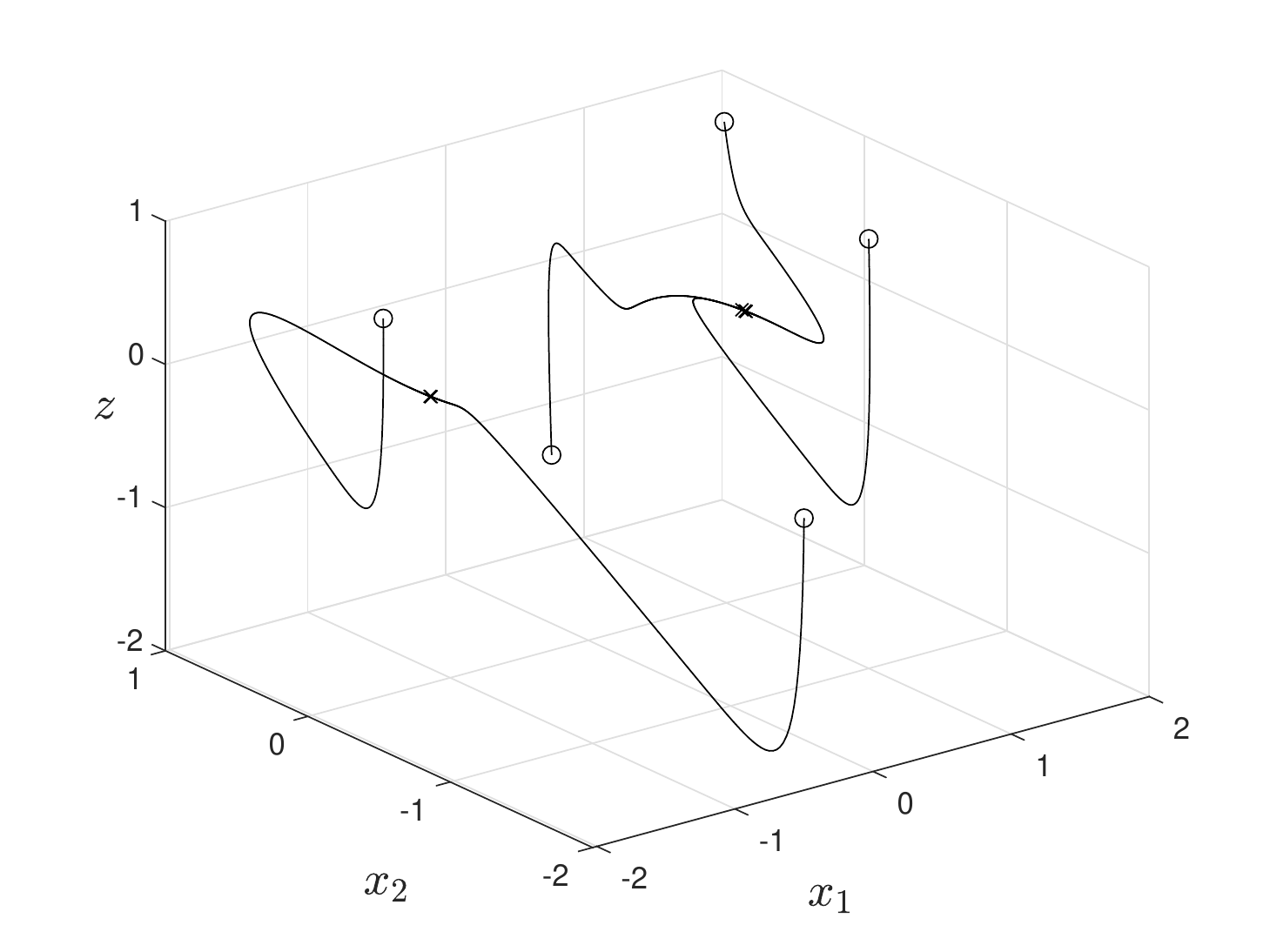}
    \caption{Several trajectories of~\eqref{eq:non_example}. Initial conditions [equilibrium points] are marked with '$\circ$' ['x'].  }
    \label{fig:simu}
\end{figure}

\section{Conclusion}
Systems that are monotone w.r.t. a   cone of rank~$k$ satisfy  useful asymptotic properties. This follows from the fact that, roughly speaking, almost all bounded solutions of such  a system  can be injectively projected into a $k$-dimensional subspace, so   essentially the dynamics is $k$-dimensional~\cite{Sanchez2009}. 

We   derived  a  sufficient condition guaranteeing that a singularly perturbed nonlinear system is monotone w.r.t. a matrix cone of rank~$k$. The   analysis is relatively simple and does  not require using geometric singular perturbations theory. Another advantage of our approach is that it  provides an explicit expression for the matrix $k$-cone~$C_{\mathcal P}^-$. 

We demonstrated our results using a synthetic example, but we believe that 
it may be useful in the analysis of real-world systems, e.g., power systems.
Another possible direction for future research is to study the relation of our results to singularly perturbed $k$-contractive systems. 

\subsection*{Acknowledgments}
We are grateful to Yi Wang for helpful comments. 

\bibliographystyle{IEEEtran}

\end{document}